\theoremstyle{plain}
\newtheorem{thm}{Theorem}
\newtheorem{lemma}[thm]{Lemma}
\newtheorem{corollary}[thm]{Corollary}
\newtheorem{prop}[thm]{Proposition}
\newcommand\cT{{\mathcal T}}
\newcommand\cP{{\mathcal P}}
\renewcommand{\P}{{\mathbb P}}
\newcommand\E{{\mathbb E}}
\newcommand{\eps}{\varepsilon}
\newcommand{\I}[1]{\mathbf{1}_{\{#1\}}}
\renewcommand{\le}{\leqslant}
\renewcommand{\ge}{\geqslant}
\author[M. Bassan]{Michal Bassan}
\address{University of Oxford, 
Department of Statistics and Keble College}
\email{michal.bassan@keble.ox.ac.uk}
\author[S. Donderwinkel]{Serte Donderwinkel}
\address{University of Groningen, 
Bernoulli Institute for Mathematics, Computer Science and AI, 
and CogniGron (Groningen Cognitive Systems and Materials Center)}
\email{s.a.donderwinkel@rug.nl}
\author[B. Kolesnik]{Brett Kolesnik}
\address{University of Warwick, 
Department of Statistics}
\email{brett.kolesnik@warwick.ac.uk}
\keywords{asymptotic enumeration; 
compound Poisson; 
forest; 
infinitely divisible; 
integer partition; 
L\'evy process;  
L\'evy--Khintchine formula; 
Otter's constants; 
P\'{o}lya enumeration; 
regularly varying; 
sub-exponential; 
tree; 
unlabeled graph}
\subjclass[2010]{05A16; 	
05A17;	
05C05;	
05C30; 	
60E07}	
\begin{document}

\title[To see the forest for the trees]
{To see the forest for the trees: On the infinite divisibility 
of unlabeled forests}

\begin{abstract} 
Inspired by Stufler's  recent probabilistic proof of Otter's 
asymptotic number of unlabeled trees, we revisit work of 
Palmer and Schwenk, and study unlabeled forests from 
a probabilistic point of view. We show that the number of 
trees in a random forest converges, with all of its moments, 
to a shifted compound Poisson. We also find the asymptotic 
proportion of forests that are trees. The key fact is that the 
number of trees $t_n$ and forests $f_n$ are related by a 
L\'evy process. As such, the results by Palmer and Schwenk 
follow by an earlier and far-reaching limit theory by Hawkes 
and Jenkins. We also show how this limit theory implies  
results by Schwenk and by Meir and Moon, related to 
degrees in large random trees. Our arguments apply, 
more generally, to the enumeration of sub-exponentially 
weighted integer partitions, or, in fact, any setting where 
the underlying L\'evy process follows the 
one big jump principle. 
\end{abstract}

\maketitle

\section{Introduction}

Stufler \cite{Stu23} recently gave a probabilistic proof of 
Otter's \cite{Ott48}  
result on
the asymptotic 
number $t_n$ of unlabeled trees 
on $n$ vertices, showing   
that 
\begin{equation}\label{E_Ott}
\lim_{n\to\infty}
\alpha^n n^{5/2} t_n
=\beta,
\end{equation}
where
$\alpha$ and $\beta$ are called {\it Otter's constants}. 
We note
that 
$\alpha\approx 0.338$ and $\beta\approx 0.534$; 
see, e.g., Finch \cite{Fin03}. 
The first constant satisfies  
\begin{equation}\label{E_alpha}
\alpha
=
\prod_{k=1}^\infty (1-\alpha^k)^{r_k},
\end{equation}
where $r_n$ is the number 
of {\it rooted} 
unlabeled trees on $n$ vertices. 
The second constant is related to 
the average preimage size of the 
surjection 
$(T,v)\mapsto R$, 
where $T$ is an unlabeled tree, 
$v$ is one of its $n$ vertices, 
and $R$ is $T$ rooted at $v$. 
Specifically, 
\begin{equation}\label{E_Ott_r}
\lim_{n\to \infty}
\frac{nt_n}{r_n}
= 
(2\pi \beta^2)^{1/3}. 
\end{equation}

A number of years later,  
Palmer and Schwenk \cite{PS79}
studied asymptotic properties of 
the   
number $f_n$ of 
unlabeled forests on $n$ vertices,  
using singularity analysis. 
In this work, we take a probabilistic 
point of view.  
In particular, in conjunction 
with Stufler's \cite{Stu23} proof of 
\eqref{E_Ott}, we obtain
alternative, probabilistic
proofs of the main results
in \cite{PS79}, 
stated as \eqref{E_PS2}
and \eqref{E_PS1} below. 
Rather than using singularity analysis, 
we will employ a 
Tauberian-type theorem by 
Hawkes and Jenkins \cite{HJ78}, 
based on the 
L\'evy--Khintchine formula from the 
theory of L\'evy proceses. 

In fact, our methods   
are more widely applicable, reaching well beyond  
trees and forests; see Section
\ref{S_apps} below.
We will, however, focus on this specific case
of interest in order to bring our probabilistic 
techniques and intuition into the clearest possible
light.

Our first result is as follows.

Let $t(x)=\sum_{n=1}^\infty t_n x^n$
be the generating function for $t_n$, 
starting the sum at $n=1$
for convenience.

We recall (see, e.g., 
Aldous \cite{Ald89})
that a random variable $X$ is {\it compound Poisson} 
distributed, with 
{\it  compounding measure} $\xi$,  
if 
\begin{equation}\label{E_CP}
\E (e^{-\phi X})
=\exp\left(
-\int_0^\infty(1-e^{-\phi x})\xi(dx)
\right),
\end{equation}
for all $\phi>0$.

\begin{thm}[Tree-forest distribution]
\label{T_TFd}
Let $\cT_n$ be the number of trees
in a uniformly random unlabeled forest on $n$ vertices. 
Then, as $n\to\infty$, $\cT_n-1$
converges, with 
all of its moments, 
to a discrete compound Poisson random variable $\cP$, 
with compounding measure  
 $\xi_k=t(\alpha^k)/k$ on integers $k\ge1$. 
\end{thm}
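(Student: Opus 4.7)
The plan is to place the random forest in a Boltzmann construction at the critical parameter $x=\alpha$, identify the pair (vertices, trees) as a bivariate integer-valued compound Poisson vector, and extract the conditional limit via the one-big-jump principle of Hawkes--Jenkins. To set this up I would first record the bivariate generating function marking both the total size and the number of trees: because a forest is a multiset of unlabeled trees,
\[
F(x,u) := \sum_{n,t \ge 0} c_{n,t}\, x^n u^t = \prod_{n \ge 1}(1-ux^n)^{-t_n},
\qquad
\log F(x,u) = \sum_{k\ge 1}\frac{u^k\, t(x^k)}{k},
\]
where $c_{n,t}$ counts forests with $n$ vertices and $t$ trees. Since $\sum_{k\ge 1} t(\alpha^k)/k$ converges by \eqref{E_Ott}, the Boltzmann distribution $\P(F=F_0)=\alpha^{|F_0|}/F(\alpha,1)$ is well defined. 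Writing $(N_\alpha,\cT_\alpha)$ for its (vertices, trees) pair, the display identifies $(N_\alpha,\cT_\alpha)$ as a bivariate integer-valued compound Poisson vector with L\'evy measure placing mass $t_n\alpha^{nk}/k$ at the lattice point $(nk,k)$ for $n,k\ge 1$. Crucially, conditioning on $N_\alpha=n$ makes $F$ uniform on forests of size $n$, so $\cT_n\stackrel{d}{=}(\cT_\alpha\mid N_\alpha=n)$.

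Next I would analyse the marginal L\'evy measure of $N_\alpha$: it places mass $t_n\alpha^n$ at $n$ from the $k=1$ stratum, regularly varying of index $-5/2$ by \eqref{E_Ott} and hence sub-exponential, plus exponentially smaller contributions at $nk$ from $k\ge 2$. The Hawkes--Jenkins theorem \cite{HJ78} then describes the conditional law of $(N_\alpha,\cT_\alpha)$ given $\{N_\alpha=n\}$: asymptotically, a single big jump of size $\approx n$ in the $k=1$ stratum occurs, carrying the deterministic $\cT$-increment $+1$, while the remaining small jumps are approximately an \emph{unconditional} compound Poisson. Projecting the bivariate L\'evy measure onto its second coordinate produces the compounding measure $\xi_k = t(\alpha^k)/k$ on $k\ge 1$, so the residual jumps form $\cP$. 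This yields $\cT_n-1\Rightarrow \cP$.

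For convergence of all moments I would upgrade the above to $L^p$ by computing factorial moments directly from $F(x,u)$. Differentiating $\log F(x,u)$ repeatedly in $u$ at $u=1$ and applying the standard Bell polynomial expansion writes each factorial moment $\E[\cT_n(\cT_n-1)\cdots(\cT_n-p+1)]$ as a finite linear combination of quantities $f_n^{-1}[x^n]\bigl(F(x)\prod_j t(x^{k_j})\bigr)$. Each such quantity is evaluated via sub-exponential convolution closure: factors $t(x^k)$ with $k\ge 2$ are analytic at $\alpha$ and contribute $t(\alpha^k)$, whereas for a product of $F(x)$ with several copies of $t(x)$ the one-big-jump principle produces a sum over which factor carries the big jump. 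Collecting these contributions recovers the factorial moments of $1+\cP$.

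The main obstacle is turning the one-big-jump heuristic for the bivariate vector into a rigorous conditional limit theorem. Concretely, one must verify that, conditional on $\{N_\alpha=n\}$, the event ``no single jump has size of order $n$'' (i.e.\ several moderately sized jumps) and the event ``the big jump comes from the $k\ge 2$ stratum'' (which would shift $\cT$ by $k$ rather than $1$) are each asymptotically negligible against the single-$k=1$-big-jump event. Both reduce to a sub-exponential comparison ultimately controlled by $t_n\alpha^n\sim\beta n^{-5/2}$, and this is precisely the content of the Hawkes--Jenkins framework applied to the bivariate L\'evy measure.
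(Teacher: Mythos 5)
Your setup is correct and attractive: the bivariate Boltzmann generating function, the identification of $(N_\alpha,\cT_\alpha)$ as a bivariate compound Poisson vector with mass $t_m\alpha^{mk}/k$ at $(mk,k)$, the fact that conditioning on $N_\alpha=n$ gives the uniform forest, and the projection of the L\'evy measure onto the second coordinate giving $\xi_k=t(\alpha^k)/k$ are all right (indeed $\cP$ is distributed as the unconditioned $\cT_\alpha$). But the central step is not closed. The theorem of Hawkes and Jenkins \cite{HJ78} is a univariate tail equivalence: under \eqref{E_LK} with regularly varying L\'evy measure it gives $\pi_n\sim\nu_n$, i.e.\ exactly the denominator asymptotics $f_n\sim t_n/\rho$ of Corollary \ref{T_TF}. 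It is \emph{not} a conditional limit theorem for the jump configuration given $\{N_\alpha=n\}$, and it says nothing about a bivariate L\'evy measure; so your statement that the needed conditional one-big-jump theorem ``is precisely the content of the Hawkes--Jenkins framework applied to the bivariate L\'evy measure'' is where the proof stops being a proof. What is actually needed is a local statement for each fixed $j$: $\P(N_\alpha=n,\,\cT_\alpha=j+1)\big/\P(N_\alpha=n)\to$ the compound Poisson mass at $j$, equivalently $[x^n]Z(S_{j+1};t(x),\dots,t(x^{j+1}))\sim \zeta_j\,t_n$ with $\zeta_j$ as in \eqref{E_zeta}. This can be obtained by a sub-exponential convolution argument applied stratum by stratum (with dominated convergence over the number of jumps), or cited from Robinson--Schwenk \cite{RS75} (the route the paper takes, combining it with \eqref{E_LKZ} to identify the limit), or from the Gibbs-partition condensation results of \cite{Stu20,PR24}; but as written you have only restated the heuristic and named the wrong theorem to discharge it.

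The moment part has the same flavour: writing the factorial moments as $f_n^{-1}[x^n]$ of finite products of $1+f(x)$, $t(x)$ and functions analytic at $\alpha$ is a sound plan, and the convolution asymptotics $[x^n](AB)\sim A(\alpha)[x^n]B+B(\alpha)[x^n]A$ for sub-exponential coefficient sequences (in the spirit of \cite{CNW73,EH82}) would indeed yield the factorial moments of $1+\cP$; but you would still have to carry out this bookkeeping and justify the exchange of the infinite sum over strata $k$ with $n\to\infty$. Note that the paper avoids all of this with a much more elementary device: a splitting argument (a forest with more than $k$ trees decomposes into two, or $m+1$, forests each of size at least $\lfloor k/2\rfloor$, resp.\ $\lfloor k/(m+1)\rfloor$) together with the two-sided bound $c\le\alpha^n n^{5/2}f_n\le C$ gives $\P(\cT_n>k)\le c'k^{-3/2}$, hence uniform integrability of all powers of $\cT_n$, which upgrades the distributional convergence of Proposition \ref{P_TFd} to convergence of all moments. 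If you want to keep your generating-function route, you must either prove the conditional limit theorem and the convolution estimates you invoke, or replace them by these citable or elementary substitutes.
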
 

Generalizations of this 
limit theorem appear in, e.g., 
Bell, Bender, Cameron  and Richmond \cite{BBCR00}
and Stufler \cite{Stu20}, 
but not with convergence of
all moments. The convergence in distribution in the 
theorem implies that the number of trees in 
the forest is tight as its size grows. 
We strengthen this and show that all the 
powers of the number of trees are uniformly integrable. 
Both of these facts 
are an effect of the  
well-known
{\it one big jump principle} that implies that 
a forest of large size is most likely big 
because of one big tree, 
a phenomenon sometimes 
referred to as \emph{condensation}.

A more general version of this result, 
with essentially the same proof, 
carries forward in any situation where $t_n$ 
is replaced by some other 
$x_n\sim b a^{-n} n^{-c}$, with $c>1$; 
or, even more generally, 
when $a^nx_n$
is proportional to 
a {\it sub-exponential} 
probability distribution
(so that the one big jump principle applies). 
See Section \ref{S_OBJP} and \ref{S_apps}
below for definitions and more context. 

In particular, we recover 
Theorem 2 in \cite{PS79},
stating that 
\begin{equation}\label{E_PS2}
\lim_{n\to\infty}
\E(\cT_n)\to 1+\sum_{k=1}^\infty t(\alpha^k). 
\end{equation}

In fact, by Theorem \ref{T_TFd}, 
 and reversing the order of summation, 
we obtain the following result. 

\begin{corollary}\label{C:TinF}
As $n\to\infty$, 
\[
\E(\cT_n)\to 
1+\sum_{k=1}^\infty \frac{t_k}{1/\alpha^k-1}
\]
and
\[
{\rm Var}(\cT_n)\to 
\sum_{k=1}^\infty \frac{k t_k}{1/\alpha^k-1}.
\]
\end{corollary}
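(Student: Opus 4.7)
The plan is to read off both limits directly from Theorem \ref{T_TFd}: since $\cT_n-1\to\cP$ with convergence of all moments, $\E(\cT_n)\to 1+\E(\cP)$ and ${\rm Var}(\cT_n)={\rm Var}(\cT_n-1)\to{\rm Var}(\cP)$. Hence the task reduces to evaluating the first two moments of the compound Poisson $\cP$ and then rewriting them via an exchange of summation.

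For the moment computation, the Laplace-transform characterization \eqref{E_CP} identifies $\cP$ in distribution with $\sum_{k\ge 1} k N_k$ for independent $N_k\sim{\rm Poisson}(\xi_k)$; equivalently, one differentiates \eqref{E_CP} in $\phi$ at $\phi=0$. Either way, the cumulants satisfy $\kappa_m(\cP)=\sum_k k^m\xi_k$. Substituting $\xi_k=t(\alpha^k)/k$ collapses $\sum_k k^m\xi_k$ to $\sum_k k^{m-1}t(\alpha^k)$, giving $\E(\cP)=\sum_{k\ge 1} t(\alpha^k)$ and ${\rm Var}(\cP)=\sum_{k\ge 1} k\,t(\alpha^k)$.

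To arrive at the stated forms, expand $t(\alpha^k)=\sum_{n\ge 1} t_n\alpha^{nk}$ and swap the order of summation. For the mean,
\[
\sum_{k\ge 1} t(\alpha^k) = \sum_{n\ge 1} t_n\sum_{k\ge 1} \alpha^{nk} = \sum_{n\ge 1} \frac{t_n}{1/\alpha^n-1},
\]
which is the stated expression after renaming $n\leftrightarrow k$; the variance is produced by the same swap with an extra factor of $k$, using the identity $\sum_{k\ge 1} k\alpha^{nk}=\alpha^n/(1-\alpha^n)^2$. The only technical point is justifying the interchange of summations, which is routine: by Otter's estimate \eqref{E_Ott}, $\alpha^n t_n\sim\beta n^{-5/2}$, so all double sums in play are absolutely convergent and Tonelli applies. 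I do not foresee any substantive obstacle beyond this bookkeeping, since the heavy lifting has already been done in Theorem \ref{T_TFd}.
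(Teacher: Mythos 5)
Your treatment of the mean is correct and essentially the paper's own route: pass from the moment convergence in Theorem \ref{T_TFd} to $\E(\cP)=\sum_{i\ge1}t(\alpha^i)$, then exchange summation to get $\sum_{k\ge1}t_k/(1/\alpha^k-1)$; the interchange is indeed harmless by positivity and $t_n\alpha^n\sim\beta n^{-5/2}$.

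The variance half, however, has a genuine gap at its final step. Your computation $\mathrm{Var}(\cP)=\sum_k k^2\xi_k=\sum_{k\ge1}k\,t(\alpha^k)$ is right, and so is the swap: with the extra factor sitting on the \emph{multiplicity} index, $\sum_k k\,t(\alpha^k)=\sum_n t_n\sum_k k\alpha^{nk}=\sum_n t_n\,\alpha^n/(1-\alpha^n)^2$. But this is \emph{not} the displayed expression $\sum_n n t_n\,\alpha^n/(1-\alpha^n)$, where the factor $n$ is attached to the \emph{tree-size} index. Expanding both in powers of $\alpha$, your series has coefficient $\sum_{d\mid m}(m/d)\,t_d$ of $\alpha^m$, while the displayed one has $\sum_{d\mid m}d\,t_d$; these already differ at $m=4$ ($8$ versus $11$), and numerically the two sums are roughly $1.04$ and $1.6$, so no rearrangement can identify them. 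Hence the sentence ``the variance is produced by the same swap with an extra factor of $k$'' does not deliver the corollary as printed, and the mismatch should have been flagged rather than asserted away. For what it is worth, your closed form $\sum_k k\,t(\alpha^k)=\sum_n t_n\alpha^n/(1-\alpha^n)^2$ is the correct limit of $\mathrm{Var}(\cT_n)$ given Theorem \ref{T_TFd}; the paper's own argument for the variance is only a sketch resting on the identification $\mathrm{Var}(\cP)=\sum_k k^2\gamma_k$ (which is the second moment, not the variance), and this is where the printed formula originates. So the discrepancy points to an error in the stated variance formula rather than in your calculus, but as a proof of the statement as written, the variance part does not go through.
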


Using the values for $t_n$, $1\le n\le 100$, 
available at the OEIS \cite{A000055}, 
we note that the asymptotic mean and variance
of $\cT_n$ are approximately $1.755$ 
and $1.471$, respectively. 

Higher moments of $\cT_n$
are more complicated, 
expressible in terms of Bell polynomials. The $m$th
asymptotic cumulant 
of $\cT_n$, on the other hand, is simply 
\[
\lim_{n\to\infty}
\kappa_m(\cT_n)= 
\sum_{k=1}^\infty \frac{k^{m-1} t_k}{1/\alpha^k-1}, 
\]
for all $m>1$. 
See, e.g., 
Mane \cite{Man24}.

In proving Theorem \ref{T_TFd}, 
we will also give an 
alternative, probabilistic proof 
of Theorem 1 in \cite{PS79},
concerning 
the asymptotic proportion of 
unlabeled forests that are connected, 
showing that  
\begin{equation}\label{E_PS1}
\lim_{n\to\infty}
\frac{t_n}{f_n}\to \frac{1}{1+f(\alpha)},
\end{equation}
where $f(x)=\sum_{i=1}^\infty f_n x^n$
is the generating function for $f_n$;
see Corollary \ref{T_TF} below.

\subsection{Our approach}

We recall that a probability distribution $\pi$ 
is {\it infinitely divisible}
if, for any $n\ge0$, 
there are 
independent and identically distributed 
$X_1,\ldots,X_n$ such that $X_1+\cdots+X_n\sim \pi$
has this distribution. 

The following
observation 
that all three sequences  
$t_n$, $r_n$ and $f_n$
are related by an infinitely divisible 
probability distribution is central to our arguments. 
We use the convention that $f_0=1$. 

\begin{thm}[Otter's distribution]
\label{T_OD}
Put 
\[
\rho=\prod_{k=1}^\infty (1-\alpha^k)^{t_k}. 
\]
Then the sequence
$(p_n,n\ge0)$ with 
\begin{equation}\label{E_pn}
p_n 
= \rho\alpha^n f_n
= f_n
\prod_{k=1}^\infty(1-\alpha^k)^{t_k+nr_k}
\end{equation}
is an {\it infinitely divisible} probability distribution. 
\end{thm}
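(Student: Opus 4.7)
The plan is to verify the statement directly, by computing the probability generating function of $(p_n)$ and exhibiting it in compound Poisson form. The key algebraic input is the P\'olya/Euler product
\[
f(x) = \prod_{k=1}^\infty (1-x^k)^{-t_k},
\]
which expresses unlabeled forests as multisets of trees. Otter's asymptotics $t_k \sim \beta \alpha^{-k} k^{-5/2}$ ensure that $\sum_k t_k \alpha^k < \infty$, so all of $\rho$, $f(\alpha)=1/\rho$, and the manipulations that follow are absolutely convergent.

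First, I would check that $(p_n)$ is a probability distribution via $\sum_n p_n = \rho\, f(\alpha) = 1$. The equivalence of the two formulas for $p_n$ in \eqref{E_pn} then reduces to $\alpha^n = \prod_k (1-\alpha^k)^{n r_k}$, which is exactly the $n$th power of Otter's identity \eqref{E_alpha}.

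The main content is infinite divisibility. I would compute
\[
G(z) := \sum_{n\ge 0} p_n z^n = \rho\, f(\alpha z) = \prod_{k=1}^\infty \left(\frac{1-\alpha^k}{1-\alpha^k z^k}\right)^{t_k}.
\]
Each factor is (the substitution $z\mapsto z^k$ of) a negative binomial PGF with integer parameter $t_k$, and is therefore infinitely divisible; alternatively, taking logarithms and expanding $-\log(1-u)=\sum_{j\ge 1}u^j/j$ yields the compound Poisson form
\[
-\log G(z) = \sum_{k,j\ge 1} \frac{t_k\, \alpha^{kj}}{j}\,(1-z^{kj}) = \sum_{n\ge 1}\mu_n(1-z^n),
\]
where, after reindexing by $n=kj$, one has $\mu_n = \frac{\alpha^n}{n}\sum_{k\mid n} k\, t_k$. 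Since $\mu_n\ge 0$ and $\sum_n \mu_n = \log f(\alpha)<\infty$, this is the logarithm of a compound Poisson PGF, which is in particular infinitely divisible.

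I do not anticipate a serious obstacle: once the P\'olya product for $f$ is evaluated at $\alpha z$, infinite divisibility is essentially built in, because $f(\alpha z)/f(\alpha)$ is literally a convergent infinite convolution of negative binomial factors. Otter's identity \eqref{E_alpha} plays no role in the divisibility itself; it enters only to rewrite $p_n$ via the rooted counts $r_k$, and the specific base $\alpha$ matters only to guarantee that the Euler product converges at $z=1$, so that $G(z)$ is a bona fide PGF.
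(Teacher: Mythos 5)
Your proposal is correct and follows essentially the same route as the paper: evaluate the P\'olya--Euler product for the forest generating function at $\alpha z$, normalize by $\rho$, and expand the logarithm to exhibit the L\'evy--Khintchine/compound Poisson form with L\'evy measure $\frac{\alpha^n}{n}\sum_{d\mid n} d\,t_d$, with \eqref{E_alpha} used only to pass between the two expressions for $p_n$. The aside about negative binomial factors is a cosmetic repackaging of the same compound Poisson structure, and your remark that any base $\hat\alpha\le\alpha$ would do matches the paper's comment following the theorem.
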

In fact, the proof of this theorem shows that,
for any $0<\hat \alpha\le \alpha$, 
\begin{equation}\label{E_hatpn}
\hat p_n=\hat \alpha^n f_n \prod_{k=1}^\infty (1-\hat \alpha^k)^{t_k}
\end{equation}
is an infinitely divisible probability distribution,
so we in fact obtain a family of such distributions. 
The second equality in \eqref{E_pn}, however, 
holds only when $\hat \alpha=\alpha$, using 
\eqref{E_alpha}.

Each infinitely divisible 
probability distribution 
$\pi$   
has an associated {\it L\'evy process} $(L_t, t\ge0)$, 
whose state $L_1\sim\pi$ at time $t=1$
has this distribution. 
The {\it L\'evy--Khintchine formula} relates $\pi$
with a corresponding 
{\it L\'evy measure} $\nu$, 
which controls the jumps of the process. 
When $(\pi_n,n\ge0)$ 
is supported on the non-negative integers, 
this formula states that 
$(\nu_j,j\ge1)$ satisfies  
\begin{equation}\label{E_LK}
\sum_{n=0}^\infty \pi_n x^n
=\exp\left(
-\sum_{j=1}^\infty (1-x^j)\nu_j
\right), 
\end{equation}
and thus $\pi$ is a compound 
Poisson distribution with compounding measure $\nu$   
(see\ \eqref{E_CP} above).
In this case,  the \emph{total L\'evy measure} 
$\lambda=\sum_{j=1}^\infty 
\nu_j$ 
is finite. 
Hence $\mu_n=\nu_n/\lambda$
is a well-defined 
probability measure and $\pi$ is the 
law of the sum of $\mathrm{Poisson}(\lambda)$ 
independent samples from $\mu$. 

As we will see (see Section \ref{S_Ottd} below), 
in the specific case of 
Otter's distribution $\pi_n=p_n$, 
the Lévy measure $\nu_n$ then equals 
\begin{equation}\label{E_nu}
\tau_n = \frac{\alpha^n}{n}\sum_{d|n}dt_n,
\end{equation}
so that by changing the order of summation 
(see (\ref{E_rev}) below), we find that  
$\rho=e^{-\lambda}$. 
By \eqref{E_Ott}, we find that $\tau_n\sim\alpha^nt_n$ is 
{\it regularly varying} with index $\gamma=-5/2$, 
in the sense that 
$\tau_{\lfloor xn\rfloor} \sim x^{\gamma} \tau_n$, 
for all $x>0$
(see, e.g., Bojanic and Seneta \cite{BS73}).
A general result of Hawkes and Jenkins \cite{HJ78}, 
from a year before \cite{PS79}, states that 
if $\pi_n$ and $\nu_n$
are related by \eqref{E_LK}
and $\nu_n$ is regularly varying 
with index $\gamma<-1$, 
then $\pi_n\sim\nu_n$. 
As such, we obtain the following result, 
which, as we will see 
(see again Section \ref{S_Ottd}),  
implies \eqref{E_PS1}.

\begin{corollary}
\label{T_TF}
As $n\to\infty$, we have that
$f_n\sim t_n/\rho$. 
\end{corollary}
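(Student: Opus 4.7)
The plan is to apply the Hawkes--Jenkins theorem \cite{HJ78} to the infinitely divisible Otter distribution $(p_n)$ from Theorem~\ref{T_OD}. By the L\'evy--Khintchine formula \eqref{E_LK}, $(p_n)$ is determined by its L\'evy measure $(\tau_n)$, which I would read off by taking $-\log$ of the product in \eqref{E_pn}, expanding $-\log(1-\alpha^{kj})=\sum_{j\ge 1}\alpha^{kj}/j$, and reindexing the resulting double sum via $n=kj$ to group by divisors. This yields the expression \eqref{E_nu},
\[
\tau_n = \frac{\alpha^n}{n}\sum_{d\mid n} d\, t_d.
\]

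Next, I would verify that $(\tau_n)$ is regularly varying with index $\gamma=-5/2<-1$. By Otter's theorem \eqref{E_Ott}, the $d=n$ summand contributes $\alpha^n t_n\sim \beta n^{-5/2}$. For any proper divisor $d\le n/2$, the corresponding term equals $(d/n)\alpha^n t_d$, which by \eqref{E_Ott} is of order $\alpha^{n-d}n^{-1}d^{-3/2}$; since $\alpha^{n-d}\le \alpha^{n/2}$, this decays exponentially in $n$. Summing over the (at most $n$) proper divisors of $n$ therefore leaves an exponentially small remainder, so $\tau_n\sim \beta n^{-5/2}$, and in particular $\tau_{\lfloor xn\rfloor}/\tau_n\to x^{-5/2}$ for every $x>0$.

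With regular variation of index $-5/2<-1$ established, the Hawkes--Jenkins theorem gives $p_n\sim \tau_n\sim \alpha^n t_n$. Substituting $p_n=\rho\alpha^n f_n$ from Theorem~\ref{T_OD} and cancelling the common factor $\rho\alpha^n$ yields $f_n\sim t_n/\rho$, as claimed. To then deduce \eqref{E_PS1}, one notes that the standard multiset identity $1+f(x)=\prod_k(1-x^k)^{-t_k}$ evaluated at $x=\alpha$ gives $\rho=1/(1+f(\alpha))$.

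The main obstacle is the analysis of $\tau_n$ in the second step: one must confirm that the $d=n$ divisor supplies the full asymptotic while every other divisor contributes an exponentially negligible remainder, so that regular variation transfers cleanly from Otter's asymptotic for $t_n$ to $\tau_n$. Once this is in hand, the Hawkes--Jenkins result together with the identification $p_n=\rho\alpha^n f_n$ closes the argument essentially without further work.
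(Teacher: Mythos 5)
Your proposal is correct and follows essentially the same route as the paper: identify the L\'evy measure $\tau_n=\frac{\alpha^n}{n}\sum_{d\mid n}d\,t_d$ of Otter's distribution via the P\'olya/log-exp expansion, observe that $\tau_n\sim\alpha^n t_n$ is regularly varying with index $-5/2<-1$, apply the Hawkes--Jenkins theorem to get $p_n\sim\tau_n$, and cancel $\rho\alpha^n$ to conclude $f_n\sim t_n/\rho$. Your divisor-by-divisor check that the $d=n$ term dominates (with exponentially small remainder from proper divisors) simply fills in the asymptotic $\tau_n\sim\alpha^n t_n$ that the paper asserts in \eqref{E_asynun}, so there is no substantive difference in approach.
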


We note that  
$p_n$ is also regularly varying. 
On the other hand, the 
infinitely divisible 
distributions 
$\hat p_n$ in \eqref{E_hatpn}
are not regularly varying when
$\hat \alpha<\alpha$.

\subsection{One big jump principle}
\label{S_OBJP}

Our proof of Corollary \ref{T_TF}
is based on the so-called  
{\it one big jump principle;}
see, e.g., 
\cite{Chi64,Hey68,Nag69}. 

The result by 
Hawkes and Jenkins \cite{HJ78}, 
as discussed above, 
was later 
extended by Embrechts and Hawkes \cite{EH82}, 
who showed that 
the following are equivalent, 
as $n\to\infty$:
\begin{enumerate}
\item[(i)] $\mu_n^*\sim 2\mu_n$ and $\mu_n\sim\mu_{n+1}$; 
\item[(ii)] $\pi_n^*\sim 2\pi_n$ and $\pi_n\sim \pi_{n+1}$;
\item[(iii)] $\pi_n\sim\nu_n$ and $\nu_{n}\sim\nu_{n+1}$. 
\end{enumerate}
Above, as usual, $\mu^*_n$ (and, similarly, $\pi_n^*$)
denotes the {\it self-convolution} 
\[
\mu_n^*=\sum_{k=0}^{n} \mu_k \mu_{n-k}.
\]

A probability measure $(\mu_n,n\ge0)$ is 
{\it sub-exponential} 
if condition (i) above is satisfied. 
Such distributions satisfy the 
so-called 
one big jump principle;
namely, 
if the sum of two (or more) independent 
sub-exponential random variables 
takes a large value, 
then most likely one of them has taken (essentially)
this value. 

The equivalent condition (iii) then says that the 
probability $\pi_n$ that the L\'evy process 
$L_1\sim \pi$ takes a 
large value $n$ is asymptotically equivalent to $\nu_n$. 
For $j\ge 1$, the value $\nu_j$ is the expected number of 
jumps by $L$ of size $j$ by time $t=1$ (i.e., 
times $s\le 1$ that 
$\lim_{\eps\downarrow0}L_{s-\eps}=L_s-j$). 
For large $j$ this is close to the probability of having 
one jump of size $j$, so $\pi_n\sim \nu_n$ essentially 
says that if $L_1$ takes a large value, 
it is most likely due to one large jump.

Since, by \eqref{E_Ott}, 
$\alpha^n t_n$ is regularly
varying with index $\gamma=-5/2$, 
it is natural to expect, 
in light of 
\eqref{E_pn} and 
\eqref{E_nu}, 
the asymptotics in 
Corollary \ref{T_TF} to hold. 

In closing, let us mention 
that different versions of the 
one big jump principle have been used recently by 
Stufler \cite{Stu20} to study the sizes of the small structures 
in a random multiset of large total size, and by 
Panagioutou and Ramzews \cite{PR24} to study 
the rare event that a  multiset of large total size consists 
of many objects. Both 
of these works apply, 
in particular, to 
our current case of 
trees in large forests.

\subsection{Further applications}
\label{S_apps}
We have focused on  
unlabeled trees and forests; however, 
our methods 
are more widely applicable. 
In this way, we provide a probabilistic
alternative to previous works, 
such as   
Compton \cite{Com87}
and 
Bell, Bender, Cameron and Richmond \cite{BBCR00}, 
which use singularity analysis to study 
distributional
features 
of large random combinatorial structures. 

For instance, our arguments extend
 immediately to the setting of {\it weighted 
integer partitions} (multi-sets)
with sub-exponential weights. 
In fact, the potential application of this 
limit theory to partitions and related structures
is alluded to in \cite[pp.\ 66--67]{HJ78}.
In the case of unlabeled trees and forests, 
we effectively assign 
weight $t_k$ to 
each integer $k\ge 1$, so that there are a total of 
$f_n$ weighted partitions of $n$. 
Of course, other weights
are possible. 
See again 
\cite{Com87,BBCR00}
and, e.g., 
the more recent works 
\cite{BG05,Mut11,Stu20,PR24}.

In another direction, see 
our recent applications 
to the enumeration of
{\it tournament score sequences} \cite{Kol23,BDK24b},
{\it Sina\u{\i} excursions} \cite{DK24b}  
 and 
{\it graphical sequences} \cite{BDK24a}.
In all of these cases, 
in contrast to the current work, the
combinatorial objects
of interest are {\it ordered}.
Indeed, the first two  
are examples of {\it renewal sequences,}
which  
can be thought of as 
{\it weighted integer compositions}. 
Graphical sequences, 
on the other hand, are {\it delayed} renewal sequences
(also called {\it Riordan arrays} in combinatorics).

We expect many further combinatorial 
applications to be found. 
The limit theory 
\cite{HJ78,EH82}
is based
on work by 
Chover, Ney and Wainger \cite{CNW73},  
which studies analytic transformations
of probability measures, 
vastly generalizing 
the classical renewal theorem. 
As such, our current methods
can,  subject to the conditions 
in \cite{HJ78,EH82}, be applied 
when a 
combinatorial structure 
decomposes  
into smaller irreducible parts, 
as is often the case. 

Finally, let us highlight the 
vast 
generalization of the 
main limit theorems in \cite{HJ78,EH82}, 
stated as Theorem C in  
Embrechts and Omey \cite{EO84}, 
where $\exp(z)$
in \eqref{E_LK} is replaced with
some other analytic function $\phi(z)$, 
pointing to applications awaiting, 
well beyond the 
L\'evy--Khintchine correspondence.

\subsection{Outline}

In Section \ref{S_Ottd}, 
we prove Theorem \ref{T_OD}
and its Corollary \ref{T_TF}. 
In Section \ref{S_TFd}, 
we prove our main result, 
Theorem \ref{T_TFd}.
Finally, in Section \ref{S_final},
we discuss some further
applications, 
related to 
cycle index asymptotics
and 
degrees
in large random trees, 
giving an alternative 
proof of a result of 
Schwenk \cite{Sch77} 
(cf.\ 
Meir and Moon \cite{MM83}).

\subsection{Acknowledgments}
MB is supported by a Clarendon Fund Scholarship. 
SD acknowledges
the financial support of the CogniGron research center
and the Ubbo Emmius Funds (University of Groningen). 
Her research was also partially supported by the 
Marie Sk\l{}odowska-Curie grant GraPhTra 
(Universality in phase transitions in random graphs), 
grant agreement ID 101211705.
Part of this research was performed while MB and SD 
were visiting the Mathematical Sciences Research Institute (MSRI), 
now becoming the Simons Laufer Mathematical Sciences Institute (SLMath), 
which is supported by the 
National Science Foundation (Grant No.\ DMS-1928930).

\section{Otter's distribution}
\label{S_Ottd} 

A forest is a multi-set of trees. 
As such, their generating functions 
$t(x)$ and $f(x)$
are related via
{\it P\'{o}lya's exponential,}
as can be seen using the {\it log-exp
transform,} as a simple instance 
of the {\it symbolic method} in 
Flajolet and Sedgewick 
\cite[p.\ 29]{FS09}. 
Indeed, as explained therein, 
let $\cT$ be the set of all unlabeled trees. 
For each $T\in\cT$,
let $|T|\ge1$ be its number of vertices. 
Each forest can be expressed as a sequence
of trees, in some canonical order.
As such, it follows that 
$1+f(x)$ 
(with $1$ accounting for the 
$f_0=1$ empty forest containing no trees) 
is an infinite product of geometric series, 
with one for each $T\in \cT$:
\begin{align}
1+f(x)&=\prod_{T\in\cT}\frac{1}{1-x^{|T|}}
=\prod_{k=1}^\infty\left(\frac{1}{1-x^k}\right)^{t_k}\nonumber\\
&=\exp\left(-\sum_{k=1}^\infty t_k\log(1-x^k)
\right)
=\exp\left(\sum_{k=1}^\infty \frac{t(x^k)}{k}
\right).\label{E_Polya2}
\end{align}
Technically, since $\cT$ is infinite, a limiting procedure
is required in order to make the above precise, 
but this is the basic idea.

The coefficient of $x^n$
inside the exponential, on the right
hand side of \eqref{E_Polya2}, is 
given by 
\begin{equation}\label{E_coeff}
[x^n]\sum_{k=1}^\infty \frac{t(x^k)}{k}
=\frac{1}{n}
\sum_{d|n} d t_d. 
\end{equation}
Therefore, the L\'evy--Khintchine formula \eqref{E_LK} 
is satisfied, with  
infinitely divisible probability distribution 
\[
p_n=\alpha^n f_n e^{-\lambda} 
\]
and L\'evy measure 
\begin{equation}\label{E_asynun}
\tau_n=\frac{\alpha^n}{n}\sum_{d|n} d t_d\sim \alpha^nt_n, 
\end{equation}
which, upon interchanging the 
order of summation,  
can be seen to have total measure 
\begin{equation}\label{E_rev}
\lambda
=
\sum_{n=1}^\infty 
\frac{\alpha^n}{n} 
\sum_{d|n} d t_d 
=\sum_{k=1}^\infty 
t_k
\sum_{i=1}^\infty 
\frac{\alpha^{ki}}{i} \\
=-\sum_{k=1}^\infty t_k\log(1-\alpha^k). 
\end{equation}
Noting that 
\[
e^{-\lambda}
=\prod_{k=1}^\infty(1-\alpha^k)^{t_k}=\rho,
\]
Theorem \ref{T_OD} follows. 

Finally, as discussed above, 
by applying \cite{HJ78}
we find that 
$p_n\sim\tau_n$
and Corollary \ref{T_TF}
follows. 
In light of \eqref{E_Polya2}, 
this is equivalent to \eqref{E_PS1}.

\section{Tree-forest distribution}
\label{S_TFd}

We recall that 
the {\it cycle
index}  $Z(S_n)$
of the symmetric group $S_n$
is the polynomial 
in the variables $x_1,\ldots,x_n$
defined by 
\[
Z(S_n)=Z(S_n;x_1,\ldots,x_n)
=
\frac{1}{n!}
\sum_{\sigma\in S_n}
\prod_{\ell=1}^n 
x_\ell^{c_\ell(\sigma)},
\]
where $c_\ell(\sigma)$
is the number of cycles
in the permutation $\sigma\in S_n$ of length $\ell$. 
It can be computed recursively. 
We note that $Z_0=1$
and
\begin{equation}\label{E_Zrec}
kZ(S_k)=\sum_{i=1}^k x_i Z(S_{k-i}),
\quad k\ge 1. 
\end{equation}
which can be deduced by 
considering the cycle of $1$ in a $\sigma\in S_k$. 
By classical 
P\'olya theory (see, e.g., \cite[Eq.\ (91), p.\
 86]{FS09}) for any $k\ge 1$, 
 the generating function for the 
 number of unlabeled forests with $k$ trees equals 
 $Z(S_{k},t(x),\dots, t(x^{k}))$, 
 so the number of unlabeled forests of size $n$ 
 with exactly $k+1$ trees equals
\[
[x^n]Z(S_{k+1},t(x),\dots, t(x^{k+1})).
\]
By Robinson and Schwenk \cite{RS75}, 
this is asymptotically equivalent to $t_n\zeta_k$ 
as $n\to \infty$, with $\zeta_1=1$ 
and 
\begin{equation}\label{E_zeta}
\zeta_k=Z(S_{k},t(\alpha),\dots, t(\alpha^{k})),\quad k\ge 1. 
\end{equation} 
As observed in \cite[p.\ 120]{PS79}, 
dividing by $f_n$ and 
using its asymptotics in Corollary \ref{T_TF}, 
the 
probability 
that a random unlabeled 
forest on $n$ vertices has 
exactly $k+1$ trees converges to
\[
\gamma_k:=\rho \zeta_k.
\]
as $n\to\infty$.

Moreover, by \eqref{E_Zrec},  
the generating functions
$\zeta(x)=\sum_{k=0}^\infty \zeta_kx^k$ 
and $\xi(x)=\sum_{i=1}^\infty t(\alpha^i)x^i$
satisfy $x\zeta'(x)=\xi(x)\zeta(x)$. 
By integrating, 
we find that 
\begin{equation}\label{E_LKZ}
\sum_{k=0}^\infty \zeta_kx^k
=\exp\left(\sum_{i=1}^\infty 
\frac{t(\alpha^i)}{i}x^i
\right).
\end{equation}

Comparing \eqref{E_LKZ} with
\eqref{E_LK} and \eqref{E_Polya2}, 
it follows that the 
sequence $(\gamma_k,k\ge0)$
is an infinitely divisible probability distribution, 
with L\'evy measure
$(\nu_i,i\ge1)$ given by 
$\nu_i=t(\alpha^i)/i$. 
Therefore, we obtain the 
following result, 
by the discrete compound Poisson characterization 
of  
infinitely divisible probability distributions on the non-negative
integers; see, e.g., Feller 
\cite[p.\ 290]{Fel68}.

\begin{prop}
\label{P_TFd}
Let $\cT_n$ be the number of trees
in a uniformly random unlabeled forest on $n$ vertices. 
Then, as $n\to\infty$, $\cT_n-1$
converges  
to a discrete compound Poisson random variable $\cP$, 
with compounding measure  
$t(\alpha^k)/k$ 
on integers $k\ge1$. 
\end{prop}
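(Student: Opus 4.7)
The plan is to combine the two ingredients that the discussion preceding the statement has essentially handed us: on the one hand, the pointwise convergence $\P(\cT_n = k+1) \to \gamma_k = \rho \zeta_k$ (obtained from \cite{RS75} together with Corollary \ref{T_TF}, after dividing through by $f_n$); and on the other hand, the generating function identity \eqref{E_LKZ}. Together these will identify the weak limit of $\cT_n - 1$ as the compound Poisson law with compounding measure $\nu_i = t(\alpha^i)/i$.

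The first step is to verify that $(\gamma_k)_{k\ge 0}$ is genuinely a probability mass function. Setting $x = 1$ in \eqref{E_LKZ} gives $\sum_{k\ge 0} \zeta_k = \exp\bigl(\sum_{i\ge 1} t(\alpha^i)/i\bigr) = e^{\lambda}$, where the last equality uses \eqref{E_rev}. Multiplying by $\rho = e^{-\lambda}$ yields $\sum_k \gamma_k = 1$. Once this is known, the pointwise convergence $\P(\cT_n = k+1) \to \gamma_k$ together with Scheff\'e's lemma (or the standard fact that pointwise convergence of mass functions to a probability mass function implies convergence in total variation, and in particular convergence in distribution) immediately gives $\cT_n - 1 \Rightarrow \cP$, where $\cP$ has law $(\gamma_k)$.

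The second step is to read off the distribution of $\cP$. Multiplying \eqref{E_LKZ} through by $\rho$ and using $\rho = e^{-\lambda}$ with $\lambda = \sum_i t(\alpha^i)/i$, the probability generating function of $\cP$ becomes
\[
\sum_{k=0}^\infty \gamma_k x^k
= \exp\left(-\sum_{i=1}^\infty \frac{t(\alpha^i)}{i}\bigl(1 - x^i\bigr)\right).
\]
Comparing this directly with the L\'evy--Khintchine formula \eqref{E_LK}, or equivalently substituting $x = e^{-\phi}$ and comparing with \eqref{E_CP}, identifies $\cP$ as a discrete compound Poisson random variable with compounding measure $\nu_i = t(\alpha^i)/i$ on integers $i \ge 1$. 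Invoking the Feller characterization of non-negative integer-valued infinitely divisible distributions as compound Poissons then closes the argument.

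There is no serious obstacle here, since the substantive inputs — the asymptotics $f_n \sim t_n/\rho$, the Robinson--Schwenk asymptotics for $Z(S_{k+1}, t(x), \ldots, t(x^{k+1}))$, and the differential equation leading to \eqref{E_LKZ} — have all been established upstream. The only mildly delicate point is making sure the upgrade from pointwise convergence of masses to convergence in distribution is clean; this is routine once $(\gamma_k)$ is shown to sum to $1$. (The genuinely non-trivial content, namely the convergence of \emph{all moments} claimed in Theorem \ref{T_TFd} but not in this proposition, would require a separate uniform integrability argument based on the one big jump principle, and is where the real work of the paper lies.)
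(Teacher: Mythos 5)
Your proposal is correct and follows essentially the same route as the paper: the Robinson--Schwenk asymptotics plus Corollary \ref{T_TF} give $\P(\cT_n=k+1)\to\gamma_k=\rho\zeta_k$, and comparing \eqref{E_LKZ} with the L\'evy--Khintchine/compound Poisson form identifies the limit law with compounding measure $t(\alpha^k)/k$. Your explicit check that $\sum_k\gamma_k=1$ (via $x=1$ in \eqref{E_LKZ} and $\rho=e^{-\lambda}$) and the Scheff\'e step merely make explicit what the paper leaves implicit, so there is no substantive difference.
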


In the next section we show that this  
probabilistic rephrasing
of the observations in 
\cite{PS79} 
yields a direct 
proof of \eqref{E_PS2}, 
that appears as Theorem 2 in 
in  \cite{PS79}, 
as well as an extension 
to the higher moments of $\cT_n$.

\subsection{Convergence of moments}

With Proposition \ref{P_TFd} in hand, 
we finish the proof of Theorem \ref{T_TFd} by showing 
that $\cT_n$ and its powers are uniformly integrable. 

The proof is once again related to the 
one big jump principle, which recall states 
that a forest is most likely big because of one big tree. 
To bound the probability that there are many trees 
in a large forest, we will 
use the fact that such (atypical) forests can be partitioned  
into multiple large forests. 

First, we 
provide the details of the uniform integrability of $\cT_n$ 
and will then explain how to adapt the argument to higher powers. 

\begin{lemma}
We have that 
\[
\lim_{K\to \infty} \limsup_{n\to\infty}
\E[\cT_n\I{\cT_n>K}]
=0.  
\]
\end{lemma}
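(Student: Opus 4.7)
The plan is to establish the stronger assertion: there exists $z>1$ such that $\sup_n \E[z^{\cT_n}] < \infty$. Once this is known, Markov's inequality gives $\P(\cT_n \ge m) \le C z^{-m}$ uniformly in $n$, whence
\[
\E[\cT_n \I{\cT_n > K}] \le C \sum_{m > K} m z^{-m} \to 0 \quad \text{as } K \to \infty.
\]
The very same estimate immediately yields uniform integrability of $\cT_n^r$ for every $r \ge 1$, which is exactly what is needed to upgrade Proposition~\ref{P_TFd} to the moment convergence asserted in Theorem~\ref{T_TFd}.

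To produce the exponential moment bound, I would rerun the argument of Section~\ref{S_Ottd} with an extra weight $z$ per tree. Writing $k(F)$ for the number of trees in a forest $F$, the P\'olya exponential computation of \eqref{E_Polya2} gives
\[
G_z(x) := \sum_F z^{k(F)} x^{|F|} = \prod_{T \in \cT}\frac{1}{1 - zx^{|T|}} = \exp\!\left(\sum_{j \ge 1}\frac{z^j\, t(x^j)}{j}\right),
\]
so that $\E[z^{\cT_n}] = [x^n] G_z(x)/f_n$. Fix any $z \in (1, 1/\alpha)$. Since $t(\alpha^j) = O(\alpha^j)$, the series $\sum_j z^j t(\alpha^j)/j$ converges and $G_z(\alpha) < \infty$; therefore
\[
\tilde p_n^{(z)} := \frac{\alpha^n\, [x^n] G_z(x)}{G_z(\alpha)}
\]
defines an infinitely divisible probability distribution on $\Z_{\ge 0}$, exactly as in Theorem~\ref{T_OD}. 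Extracting coefficients as in \eqref{E_coeff}, its L\'evy measure is
\[
\tilde\tau_n^{(z)} = \frac{\alpha^n}{n}\sum_{d\mid n} d\, z^{n/d}\, t_d.
\]

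As in \eqref{E_asynun}, the $d = n$ term dominates and, by \eqref{E_Ott}, $\tilde\tau_n^{(z)} \sim z \alpha^n t_n$ is regularly varying with index $-5/2$. Invoking Hawkes and Jenkins~\cite{HJ78} verbatim as in the proof of Corollary~\ref{T_TF} then yields $\tilde p_n^{(z)} \sim \tilde\tau_n^{(z)} \sim z \alpha^n t_n$. Combined with $f_n \sim t_n/\rho$ from Corollary~\ref{T_TF}, this gives, as $n \to \infty$,
\[
\E[z^{\cT_n}] = \frac{[x^n] G_z(x)}{f_n} \to z\rho\, G_z(\alpha) < \infty,
\]
so in particular $\sup_n \E[z^{\cT_n}] < \infty$.

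The only real obstacle is checking that the Hawkes--Jenkins hypotheses transfer from the pair $(p_n, \tau_n)$ of Section~\ref{S_Ottd} to the tilted pair $(\tilde p_n^{(z)}, \tilde\tau_n^{(z)})$, but this is immediate by inspection: the computations are identical upon inserting the weights $z^j$. Conceptually, the bound is another instance of the one big jump principle: even after reweighting by $z^{k(F)}$, an atypically large forest still derives most of its size from a single big tree, irrespective of how many additional small trees are appended.
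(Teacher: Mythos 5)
Your proof is correct, but it takes a genuinely different route from the paper's. The paper argues combinatorially: after isolating the term $K\P(\cT_n>K)$, it bounds the number of forests on $n$ vertices with more than $k$ trees by decomposing any such forest into two sub-forests whose sizes sum to $n$, the smaller having at least $\lfloor k/2\rfloor$ vertices; combined with the two-sided bound $c\le \alpha^n n^{5/2} f_n\le C$ (from \eqref{E_Ott} and Corollary~\ref{T_TF}), this gives $\P(\cT_n>k)\le c' k^{-3/2}$ uniformly in $n$, which is summable, and higher powers are handled by splitting into $m+1$ large sub-forests. You instead tilt by the number of trees and invoke Hawkes--Jenkins a second time, for the weighted generating function $G_z$, to get $\sup_n \E[z^{\cT_n}]<\infty$ for $z\in(1,1/\alpha)$; this yields a geometric tail bound $\P(\cT_n\ge m)\le C z^{-m}$ uniform in $n$, which is stronger than the paper's polynomial bound and disposes of all powers of $\cT_n$ in one stroke (indeed your limit $z\rho\,G_z(\alpha)$ equals $\E[z^{\cP+1}]$, so you essentially prove convergence of the probability generating function on an interval beyond $1$). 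The price is a second application of \cite{HJ78}, and the one step you should spell out is the claim $\tilde\tau_n^{(z)}\sim z\alpha^n t_n$: unlike in \eqref{E_asynun}, the divisor terms with $d<n$ now carry weights $z^{n/d}$ that grow with $n$ (the $d=1$ term is $(z\alpha)^n/n$), and it is precisely the restriction $z<1/\alpha$ that keeps all of them exponentially small relative to $z\alpha^n t_n\sim z\beta n^{-5/2}$; with that check recorded, the Hawkes--Jenkins hypotheses do transfer verbatim and your argument is complete.
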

\begin{proof}
Fix $K$ and observe that 
\begin{equation}\label{E_1stM}
\E[\cT_n\I{\cT_n>K}]
=K\P(\cT_n>K)
+\sum_{k=K}^\infty \P(\cT_n>k).
\end{equation}
Since $\cT_n$ converges in distribution to $\cP+1$, it follows
that 
\[
K\P(\cT_n>K)\to K\P(\cP\ge K). 
\]
Since $\E(\cP)<\infty$, 
the right hand side 
can be made arbitrarily 
small by making 
$K$ large. 

We turn to the second term in \eqref{E_1stM}. 
First, we note, by \eqref{E_Ott} and Corollary~\ref{T_TF}, 
that there are constants $c,C>0$ such that 
\begin{equation}\label{bound_fn}
c  
\le \alpha^n n^{5/2} f_n
\le C,  
\end{equation}
for all $n\ge1$.
Next, we observe that, 
for any $n$ and $k$,  
and any forest with $n$ vertices and more than $k$ trees, 
we can decompose the forest into a pair of forests, such that:  
(1) their sizes sum to $n$, 
(2) the first has at least $\lfloor k/2\rfloor$ vertices,  
and (3) the second is at least as big as the first. 
As such, 
the number of forests with $n$ vertices and more than $k$ trees 
is at most 
\[
\sum_{\ell=\lfloor k/2 \rfloor }^{\lfloor n/2\rfloor }f_{\ell}f_{n-\ell}.
\]
Therefore, applying \eqref{bound_fn}, 
we find that 
$\P(\cT_n>k)$
is at most 
\[
\frac{c^2}{C}
\sum_{\ell=\lfloor k/2 \rfloor}^{\lfloor n/2\rfloor} 
\left(\frac{n}{\ell(n-\ell)}\right)^{5/2}
\le 
\frac{2^{5/2} c^2}{C}
\sum_{\ell=\lfloor k/2 \rfloor}^{\lceil n/2 \rceil} \frac{1}{\ell^{5/2}}
\le 
\frac{c'}{k^{3/2}}, 
\]
for some constant $c'$ 
not depending on $k$ and $n$. 
This is summable, so, for any $\eps>0$,  
we can choose $K$ large so that 
$\E[\cT_n\I{\cT_n>K}]< \eps $,  
for all large $n$. 
\end{proof}

Finally, we note that the proof 
above can be adapted 
to show that $\cT^m_n$
is uniformly integrable, for 
any integer $m\ge1$. 
The basic idea is to 
split a forest with more than $k$ trees into $m+1$ 
forests of size at least $\lfloor k/(m+1)\rfloor$ 
and repeat the arguments above. 
The adaptation is straightforward, 
so we omit the details. 
Together with Proposition \ref{P_TFd}, this yields
Theorem \ref{T_TFd}. 

To see that Theorem~\ref{T_TFd} implies  
\eqref{E_PS2} 
and Corollary \ref{C:TinF}, note that 
\[
\E(\cP)=\sum_{k=0}^\infty k\gamma_k
=\sum_{i=1}^\infty t(\alpha^i).  
\]
By similar reasoning
as \eqref{E_coeff}
and \eqref{E_rev}, 
this is equal to 
\[
\sum_{k=1}^\infty \alpha^k\sum_{d|k}t_d=
\sum_{k=1}^\infty \frac{t_k}{1/\alpha^k-1},
\]
in line with 
Corollary~\ref{C:TinF}. 
The asymptotic variance of $\cT_n$
stated in Corollary~\ref{C:TinF} 
can be calculated similarly, 
noting that ${\rm Var}(\cP)=\sum_{k=0}^\infty k^2\gamma_k$.

\section{Cycle index asymptotics}
\label{S_final}

We conclude
with 
two related  applications
of \cite{HJ78,EH82}
to the asymptotics 
of cycle indices of 
the symmetric group.

\subsection{Degrees in large random trees}
\label{S_deg}

Recall the cycle index $\zeta_n$, 
as defined in \eqref{E_zeta}.
Note that $\alpha^{-n}t(\alpha^n)\to t_1=1$, 
as $n\to\infty$. 
Therefore, 
by \eqref{E_LKZ} and   
Theorem 4.2 in 
\cite{HJ78} (where, for this application, we set 
$a_n=\alpha^{-n} t(\alpha^n)$
and 
$b_n=\alpha^{-n}\zeta_n$), 
it follows immediately that, as
$n\to\infty$, 
\begin{equation}\label{E_zeta_asy}
\zeta_n
\sim  e^\xi \alpha^n,
\end{equation}
where
\[
\xi=
\sum_{k=1}^\infty
\frac{1}{k}
\left(\frac{t(\alpha^k)}{\alpha^k}-1\right).
\]
Noting again that $t_1=1$, and 
reversing the order 
of summation,
we see that 
\[
e^\xi
=\prod_{k=2}^\infty
(1-
\alpha^{k-1})^{-t_k}.
\]

This result was proved by
Schwenk \cite{Sch77}, 
using an argument that involves
a certain generalization
of the inclusion--exclusion
principle. Another
proof was later given by 
Meir and Moon \cite{MM83}. 
In fact, as noted above, the result is 
a direct consequence of \cite{HJ78}, which precedes the latter.

The asymptotics
\eqref{E_zeta_asy}
are used, 
together with 
\cite{RS75}, 
to obtain information about 
degrees in large random trees; 
see Corollary 4.1 in \cite{Sch77}.

\subsection{Sub-exponential sequences}
\label{S_CI}

Finally,  
by combining 
\cite[Theorem 1]{EH82}
and   \eqref{E_Zrec},
we observe the following
generalization of 
Corollary \ref{T_TF}. 

\begin{thm}
\label{T_CIasy}
Suppose that $(\nu_k,k\ge1)$ 
is a 
positive sequence, 
with a finite total sum 
$\lambda=\sum_{k=1}^\infty \nu_k<\infty$. 
Suppose also that $\mu_k=\nu_k/\lambda$
defines a  
sub-exponential probability 
distribution on the integers $k\ge1$. 
Then, as $n\to\infty$, 
\begin{equation}\label{E_Zn}
Z(S_n;\nu_1,2\nu_2,\ldots,n\nu_n)
\sim 
e^\lambda 
\nu_n.
\end{equation}
\end{thm}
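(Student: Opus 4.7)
The plan is to reduce the statement to the equivalence $\mathrm{(i)}\Leftrightarrow\mathrm{(iii)}$ from \cite{EH82} that was quoted earlier in the excerpt. Write $Z_n := Z(S_n;\nu_1,2\nu_2,\ldots,n\nu_n)$ for brevity. The first step is to identify the generating function of $Z_n$. Starting from the recursion \eqref{E_Zrec} with $x_i=i\nu_i$ (or equivalently from the classical P\'olya exponential formula for cycle indices), one gets
\[
\sum_{n\ge 0}Z_n z^n
=\exp\!\left(\sum_{i\ge1}\frac{(i\nu_i)\,z^i}{i}\right)
=\exp\!\left(\sum_{i\ge1}\nu_i z^i\right).
\]
Equivalently, since $\lambda=\sum_i \nu_i<\infty$,
\[
\sum_{n\ge0}e^{-\lambda}Z_n z^n
=\exp\!\left(-\sum_{i\ge1}(1-z^i)\nu_i\right),
\]
which is precisely \eqref{E_LK} with $\pi_n:=e^{-\lambda}Z_n$ and L\'evy measure $(\nu_i,i\ge1)$. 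Setting $z=1$ confirms that $\sum_n\pi_n=1$, so $\pi$ is an infinitely divisible (compound Poisson) probability distribution on the non-negative integers with total L\'evy mass $\lambda$ and normalized jump law $\mu$.

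With this identification in hand, the second step is routine: by assumption $\mu$ is sub-exponential, i.e., condition (i) from Section \ref{S_OBJP} holds, and the equivalence $\mathrm{(i)}\Leftrightarrow\mathrm{(iii)}$ from \cite{EH82} gives $\pi_n\sim\nu_n$ as $n\to\infty$. Multiplying by $e^\lambda$ yields $Z_n\sim e^\lambda\nu_n$, which is exactly \eqref{E_Zn}. Corollary \ref{T_TF} is recovered as the special case where $\nu_n=\tau_n$ from \eqref{E_nu}, since then $Z_n=f_n$, $e^{\lambda}=\rho^{-1}$, and $\tau_n\sim \alpha^nt_n$ is regularly varying, hence subexponential after normalizing.

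There is really no main obstacle, only bookkeeping: one has to check that the generating function manipulation is legitimate (which it is, because all coefficients are non-negative and the series converges at least for $|z|<1$ by $\sum \nu_k<\infty$), and that sub-exponentiality of $\mu$ is the hypothesis needed to invoke $\mathrm{(i)}\Rightarrow\mathrm{(iii)}$ from \cite{EH82} (noting that sub-exponentiality already includes $\mu_n\sim\mu_{n+1}$, so the extra tail-regularity built into (i) is automatic). The mild subtlety worth emphasizing in the writeup is that the conclusion $\pi_n\sim\nu_n$ of \cite{EH82} translates, through the multiplicative constant $e^\lambda$ coming from the L\'evy--Khintchine normalization, into exactly the stated asymptotic \eqref{E_Zn} for the cycle index.
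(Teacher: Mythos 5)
Your proposal is correct and follows essentially the same route as the paper: setting $x_i=i\nu_i$ in the recursion \eqref{E_Zrec} to put $\pi_n=e^{-\lambda}Z_n$ in the L\'evy--Khintchine form \eqref{E_LK}, then invoking the Embrechts--Hawkes equivalence $\mathrm{(i)}\Leftrightarrow\mathrm{(iii)}$ to conclude $\pi_n\sim\nu_n$. The only difference is that you spell out the generating-function identity and the normalization check, which the paper leaves implicit.
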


In this work, 
we have demonstrated 
a procedure for 
obtaining the 
asymptotics of a sequence 
$(1=a_0,a_1,\ldots)$. 
That is, suppose, for some $\gamma>0$, 
that 
$\gamma^na_n$
is summable, so 
proportional 
to a probability distribution $\pi_n$. 
Then if 
\eqref{E_LK}
holds
for some 
positive sequence 
$(\nu_n,n\ge1)$, with total sum $\lambda$, and
$\mu_n=\nu_n/\lambda$ is sub-exponential, 
then $\pi_n\sim \nu_n$. 
The above theorem
reverses this procedure by, given $\nu_n$, 
identifying $\pi_n$ that satisfies (\ref{E_LK}). 
Indeed, for $\pi_n=e^{-\lambda}Z_n$ and
$Z_n$ the cyclic index in \eqref{E_Zn} above, 
(\ref{E_LK}) follows by setting
$x_i=i\nu_i$ in \eqref{E_Zrec} above. 
Thus, if $\mu_n=\nu_n/\lambda$
is sub-exponential, 
then $\pi_n\sim\nu_n$ 
and the theorem follows.


\makeatletter
\renewcommand\@biblabel[1]{#1.}
\makeatother

\providecommand{\bysame}{\leavevmode\hbox to3em{\hrulefill}\thinspace}
\providecommand{\MR}{\relax\ifhmode\unskip\space\fi MR }
\providecommand{\MRhref}[2]{%
  \href{http://www.ams.org/mathscinet-getitem?mr=#1}{#2}
}
\providecommand{\href}[2]{#2}


\begin{thebibliography}{10}

\bibitem{Ald89}
D.~Aldous, \emph{Probability approximations via the {P}oisson clumping
  heuristic}, Applied Mathematical Sciences, vol.~77, Springer-Verlag, New
  York, 1989.

\bibitem{BG05}
A.~D. Barbour and B.~L. Granovsky, \emph{Random combinatorial structures: the
  convergent case}, J. Combin. Theory, Series A \textbf{109} (2005), no.~2,
  203--220.

\bibitem{BDK24a}
M.~Bassan, S.~Donderwinkel, and B.~Kolesnik, \emph{Graphical sequences and
  plane trees}, preprint at
  \href{https://arxiv.org/abs/2406.05110}{arXiv:2406.05110}.

\bibitem{BDK24b}
\bysame, \emph{Tournament score sequences, {E}rd{\H o}s--{G}inzburg--{Z}iv
  numbers, and the {L}\'evy--{K}hintchine method}, preprint at
  \href{https://arxiv.org/abs/2407.01441}{arXiv:2407.01441}.

\bibitem{BBCR00}
J.~A. Bell, E.~A. Bender, P.~J. Cameron, and L.~B. Richmond, \emph{Asymptotics
  for the probability of connectedness and the distribution of number of
  components}, Electron. J. Combin. \textbf{7} (2000), no.~R33, 1--22.

\bibitem{BS73}
R.~Bojanic and E.~Seneta, \emph{A unified theory of regularly varying
  sequences}, Math. Z. \textbf{134} (1973), 91--106.

\bibitem{Chi64}
V.~P. Chistyakov, \emph{A theorem on sums of independent random variables and
  its applications to branching random processes}, Theory Probab. Appl.
  \textbf{9} (1964), no.~4, 640--648.

\bibitem{CNW73}
J.~Chover, P.~Ney, and S.~Wainger, \emph{Functions of probability measures}, J.
  Analyse Math. \textbf{26} (1973), 255--302.

\bibitem{Com87}
K.~J. Compton, \emph{Some methods for computing component distribution
  probabilities in relational structures}, Discrete Math. \textbf{66} (1987),
  no.~1, 59--77.

\bibitem{DK24b}
S.~Donderwinkel and B.~Kolesnik, \emph{{S}ina\u{\i} excursions: An analogue of
  {S}parre {A}ndersen's formula for the area process of a random walk},
  preprint at \href{https://arxiv.org/abs/2403.12941}{arXiv:2403.12941}.

\bibitem{EH82}
P.~Embrechts and J.~Hawkes, \emph{A limit theorem for the tails of discrete
  infinitely divisible laws with applications to fluctuation theory}, J.
  Austral. Math. Soc. Ser. A \textbf{32} (1982), no.~3, 412--422.

\bibitem{EO84}
P.~Embrechts and E.~Omey, \emph{Functions of power series}, Yokohama Math. J.
  \textbf{32} (1984), no.~1-2, 77--88.

\bibitem{Fel68}
W.~Feller, \emph{An introduction to probability theory and its applications.
  {V}ol. {I}}, third ed., John Wiley \& Sons, Inc., New York-London-Sydney,
  1968.

\bibitem{Fin03}
S.~R. Finch, \emph{Mathematical constants}, Encyclopedia Math. Appl., vol.~94,
  Cambridge University Press, Cambridge, 2003.

\bibitem{FS09}
P.~Flajolet and R.~Sedgewick, \emph{Analytic combinatorics}, Cambridge
  University Press, Cambridge, 2009.

\bibitem{HJ78}
J.~Hawkes and J.~D. Jenkins, \emph{Infinitely divisible sequences}, Scand.
  Actuar. J. (1978), no.~2, 65--76.

\bibitem{Hey68}
C.~C. Heyde, \emph{On large deviation probabilities in the case of attraction
  to a non-normal stable law}, Sankhy\={a} Ser. A \textbf{30} (1968), 253--258.

\bibitem{Kol23}
B.~Kolesnik, \emph{The asymptotic number of score sequences}, Combinatorica
  \textbf{43} (2023), no.~4, 827--844.

\bibitem{Man24}
S.~R. Mane, \emph{Compound {P}oisson {D}istributions: {A} {G}eneral
  {F}ramework}, J. Indian Soc. Probab. Stat. \textbf{26} (2025), 277--338.

\bibitem{MM83}
A.~Meir and J.~W. Moon, \emph{On an asymptotic evaluation of the cycle index of
  the symmetric group}, Discrete Math. \textbf{46} (1983), no.~1, 103--105.

\bibitem{Mut11}
L.~Mutafchiev, \emph{Limit {T}heorems for the {N}umber of {P}arts in a {R}andom
  {W}eighted {P}artition}, Electron. J. Combin. \textbf{18} (2011), no.~1,
  \#P206, 1--27.

\bibitem{Nag69}
A.~V. Nagaev, \emph{Integral limit theorems taking large deviations into
  account when {C}ram{\'e}r's condition does not hold}, Theory Probab. Appl.
  \textbf{14} (1969), 51--64.

\bibitem{Ott48}
R.~Otter, \emph{The number of trees}, Ann. of Math. (2) \textbf{49} (1948),
  583--599.

\bibitem{PS79}
E.~M. Palmer and A.~J. Schwenk, \emph{On the number of trees in a random
  forest}, J. Combin. Theory Ser. B \textbf{27} (1979), no.~2, 109--121.

\bibitem{PR24}
K.~Panagiotou and L.~Ramzews, \emph{Asymptotic enumeration and limit laws for
  multisets: the subexponential case}, Ann. Inst. Henri Poincar\'{e} Probab.
  Stat. \textbf{60} (2024), no.~1, 612--635.

\bibitem{RS75}
R.~W. Robinson and A.~J. Schwenk, \emph{The distribution of degrees in a large
  random tree}, Discrete Math. \textbf{12} (1975), no.~4, 359--372.

\bibitem{Sch77}
A.~J. Schwenk, \emph{An asymptotic evaluation of the cycle index of a symmetric
  group}, Discrete Math. \textbf{18} (1977), no.~1, 71--78.

\bibitem{A000055}
N.~J.~A. Sloane, \emph{Number of trees with $n$ unlabeled nodes}, The
  {O}n-{L}ine {E}ncyclopedia of {I}nteger {S}equences, entry
  \href{https://oeis.org/A000055}{A000055}.

\bibitem{Stu23}
B.~Stufler, \emph{Probabilistic enumeration and equivalence of nonisomorphic
  trees}, preprint at
  \href{https://arxiv.org/abs/2305.16453}{arXiv:2305.16453}.

\bibitem{Stu20}
\bysame, \emph{Unlabelled {G}ibbs partitions}, Combin. Probab. Comput.
  \textbf{29} (2020), no.~2, 293--309.

\end{thebibliography}
\end{document}